\documentclass[12pt]{article}
\usepackage{amsmath, amssymb, amsthm}
\usepackage[colorlinks=true]{hyperref}

\theoremstyle{plain}
\newtheorem{theorem}{Theorem}
\newtheorem{corollary}[theorem]{Corollary}
\newtheorem{lemma}[theorem]{Lemma}

\theoremstyle{definition}
\newtheorem{definition}[theorem]{Definition}

\theoremstyle{remark}
\newtheorem{remark}[theorem]{Remark}

\newcommand{\N}{\mathbb{N}}
\newcommand{\Z}{\mathbb{Z}}

\newcommand{\R}{\mathbb{R}}

\newcommand{\ds}{\displaystyle}

\hyphenation{Bernou-lli}
\allowdisplaybreaks[1]

\date{}

\begin{document}

\title{A Proof of Symmetry of the Power Sum Polynomials using a Novel Bernoulli Numbers Identity}
\markright{Symmetry of the Power Sum Polynomials}

\author{Nicholas J. Newsome, Maria S. Nogin,\\ and Adnan H. Sabuwala\\Department of Mathematics\\California State University, Fresno\\Fresno, CA 93740 \\ USA\\\href{mailto:mrgoof@mail.fresnostate.edu}{mrgoof@mail.fresnostate.edu}\\\href{mailto:mnogin@csufresno.edu}{mnogin@csufresno.edu}\\\href{mailto:asabuwala@csufresno.edu}{asabuwala@csufresno.edu}}

\maketitle

\begin{abstract}
The problem of finding formulas for sums of powers of natural numbers has been of interest to mathematicians for many centuries.  
Among these is Faulhaber's well-known formula expressing the power sums as polynomials whose coefficients involve Bernoulli numbers. In this paper we give an elementary proof that the sum of $p$-th powers of the first $n$ natural numbers can be expressed as a polynomial in $n$ of degree $p+1$. We also prove a novel identity involving Bernoulli numbers and use it to show symmetry of this polynomial. 
\end{abstract}

\section{Introduction.} 

In an introductory calculus class, undergraduate students usually encounter the following summation formulas: 
\begin{equation}
\label{power-sum-formulas}
\begin{array}{l@{\ }c@{\ }l}
\ds \sum_{k=1}^n k & = & 1+2+\dots+n = \ds \frac{n(n+1)}{2} \\ 
\ds \sum_{k=1}^n k^2 & = & 1^2+2^2+\dots+n^2 = 
\ds \frac{n(n+1)(2n+1)}{6} \\ 
\ds \sum_{k=1}^n k^3 & = & 1^3+2^3+\dots+n^3 = 
\ds \frac{n^2(n+1)^2}{4} \\ 
\end{array}
\end{equation}
for any positive integer $n$. Typically, these are proved by mathematical induction and are used in the Riemann sum evaluation of definite integrals.

The problem of finding formulas for sums of powers of integers has captivated mathematicians for many centuries \cite{Beery}. The Pythagoreans were the first to discover the formula for the sum of the first powers with their pebble experiments. Archimedes and Aryabhata are credited for a geometric proof of the remaining two formulas above, respectively. These formulas were first introduced in a generalizable form by Harriot. Faulhaber provided formulas for power sums up to the $17^{\text{th}}$ power, but did not make it clear how to generalize them. Later, Fermat, Pascal, and Bernoulli discovered and presented succinct formulas for representing these sums. 

Ever since generalized formulas for the powers sums,   
$\displaystyle S_p(n)=\sum_{k=1}^{n}k^p$, 
have been established, their various representations and number-theoretic properties have been studied \cite{Knuth, MS}. Faulhaber's formula expresses the power sums as polynomials whose coefficients involve Bernoulli numbers \cite[p.\ 107]{Conway} that have a rich history of their own. Until close to the $21^{\text{st}}$ century, very few recurrences of Bernoulli numbers were known. Such recurrences in Bernoulli numbers reveal important aspects and properties of these numbers that can be used to simplify proofs of known identities \cite{Howard}. Namias \cite{Namias} derived recurrences based on Gauss's multiplication formula and a generalization of those formulas was posed as a problem in the \emph{The Amer. Math. Monthly} \cite{Belinfante}. Tuenter \cite{Tuenter}  proved a relation of symmetry between the power sum polynomials and Bernoulli numbers which can be used to generalize Namias's recurrence formulas. The power sum polynomials can be expressed in terms of Bernoulli polynomials \cite{Apos}: 
$$S_p(n)=\frac{1}{p+1}(B_{p+1}(n+1)-B_{p+1}(1)).$$ 
Lehmer \cite{Lehmer} proved symmetry of Bernoulli polynomials 
$$B_m(1-x)=(-1)^mB_m(x)$$
using the Fourier series representation. This implies symmetry of the power sum polynomials
$$S_p(-(n+1))=(-1)^{p+1}S_p(n)$$
although this result has not appeared in the literature. In this paper, we present an alternate proof of the above symmetry of the power sum polynomials using a novel identity involving Bernoulli numbers.

This paper is organized into five sections. In section \ref{recursive-definition-section}, we provide a recursive definition for $S_p(n)$ which is used to prove that the sum of $p$-th powers of natural numbers is a polynomial of degree $p+1$. In section \ref{Bernoulli-section}, we prove a novel identity for the Bernoulli numbers. In section \ref{Spn-symmetry-section}, we use this identity to prove the symmetry of $S_p(n)$. Lastly, in section \ref{open-problems-section} we pose some open problems about the roots of $S_p(n)$. 

\section{A recursive definition for \texorpdfstring{$S_p(n)$}{Spn}.}
\label{recursive-definition-section}

\begin{definition}
\label{Spn-definition}
For $n\in\R$, let $S_1(n)=\ds\frac{n(n+1)}{2}$. 

For $p \ge 2$ and $n \in \R$, define
$$S_p(n)=\frac{1}{p+1}\left((n+1)((n+1)^p-1)-\sum_{i=1}^{p-1}\binom{p+1}{i}S_i(n)\right).$$
\end{definition}

\begin{remark}
For each $p\in\N$, $S_p(n)$ is a polynomial of degree $p+1$. 
\end{remark}

\begin{lemma}
\label{first-lemma}
For $p \in \N$ and $n \in \R$, 
$$(n+1)\left((n+1)^p-1\right)=n(n+1)^p+(n+1)\left((n+1)^{p-1}-1\right).$$
\end{lemma}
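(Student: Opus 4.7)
The plan is to verify this identity by direct algebraic manipulation, since both sides are polynomial expressions in the single quantity $n+1$ (together with the factor $n$ on the right). The strategy is to expand each side and show that they both collapse to the common closed form $(n+1)^{p+1}-(n+1)$.

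First I would distribute on the left-hand side to obtain
$$(n+1)\bigl((n+1)^p-1\bigr)=(n+1)^{p+1}-(n+1).$$
Then I would simplify the right-hand side similarly, distributing the second term:
$$n(n+1)^p+(n+1)\bigl((n+1)^{p-1}-1\bigr)=n(n+1)^p+(n+1)^p-(n+1).$$
The key (and only) observation is that $n(n+1)^p+(n+1)^p=(n+1)(n+1)^p=(n+1)^{p+1}$, so the right-hand side also reduces to $(n+1)^{p+1}-(n+1)$, matching the left-hand side.

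There is essentially no obstacle to this lemma; it is a one-line algebraic identity whose purpose is evidently to serve as a rewriting tool in a subsequent induction on $p$, presumably for the degree/polynomiality claim in the remark preceding it. The only mild care needed is to note that the formula is stated for all $p\in\N$, so one should verify that the expression $(n+1)^{p-1}$ is well-defined in the relevant range (which it is, treating $p=1$ with the convention $(n+1)^0=1$ so that the second summand on the right vanishes and the identity reduces to $(n+1)n=n(n+1)$).
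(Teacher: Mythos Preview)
Your proof is correct and is essentially the same as the paper's: both hinge on the single observation $(n+1)(n+1)^p = n(n+1)^p + (n+1)^p$, the only cosmetic difference being that the paper chains the left-hand side into the right-hand side directly rather than reducing both to the common form $(n+1)^{p+1}-(n+1)$. (Your side remark about the lemma's purpose is slightly off---it is actually invoked inside the induction proving $S_p(n)=\sum_{k=1}^n k^p$, not the polynomiality remark---but that does not affect the argument.)
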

\begin{proof}
Indeed, 
\begin{align*}
(n+1)\left((n+1)^p-1\right) & =(n+1)(n+1)^p-(n+1)\\
&= n(n+1)^p+(n+1)^p-(n+1)\\
& = n(n+1)^p+(n+1)\left((n+1)^{p-1}-1\right). 
\end{align*}
\end{proof}

\begin{lemma}
\label{second-lemma}
For $p \in \N$ and $n \in \R$, 
$$\sum_{i=1}^{p} \binom{p+1}{i-1}n^i=n\left((n+1)^{p+1}-n^p(p+n+1)\right).$$
\end{lemma}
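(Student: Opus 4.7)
The plan is to reduce the identity to the binomial theorem by a single index shift. The left-hand side is $\sum_{i=1}^{p}\binom{p+1}{i-1}n^i$, so I would first substitute $j=i-1$ to rewrite it as $\sum_{j=0}^{p-1}\binom{p+1}{j}n^{j+1}$, and then factor out an $n$ to obtain
$$\sum_{i=1}^{p}\binom{p+1}{i-1}n^i = n\sum_{j=0}^{p-1}\binom{p+1}{j}n^j.$$
This reduces the problem to evaluating $\sum_{j=0}^{p-1}\binom{p+1}{j}n^j$ in closed form.

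Next I would invoke the binomial theorem $(n+1)^{p+1}=\sum_{j=0}^{p+1}\binom{p+1}{j}n^j$ and observe that the sum in question is just this full expansion with the top two terms removed. The missing $j=p$ and $j=p+1$ terms are $\binom{p+1}{p}n^p=(p+1)n^p$ and $\binom{p+1}{p+1}n^{p+1}=n^{p+1}$, so
$$\sum_{j=0}^{p-1}\binom{p+1}{j}n^j = (n+1)^{p+1}-(p+1)n^p-n^{p+1}.$$
Factoring $n^p$ out of the subtracted terms gives $(p+1)n^p+n^{p+1}=n^p(n+p+1)$.

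Substituting back yields $n\left((n+1)^{p+1}-n^p(n+p+1)\right)$, which is exactly the right-hand side. There is no real obstacle in this argument; the only point requiring care is the index bookkeeping when peeling off the two terms at the top of the binomial expansion, and making sure the reindexed sum starts at $j=0$ rather than $j=1$ so that the ``missing'' binomial coefficients correspond to $j=p$ and $j=p+1$ and not to some other indices.
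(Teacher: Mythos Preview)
Your argument is correct, and in fact considerably simpler than the paper's. The paper proves the lemma by induction on $p$: it checks $p=1$ directly, then for the inductive step applies Pascal's rule $\binom{p+2}{i-1}=\binom{p+1}{i-1}+\binom{p+1}{i-2}$ to split $\sum_{i=1}^{p+1}\binom{p+2}{i-1}n^i$ into two sums, invokes the inductive hypothesis on each piece, and simplifies through several lines of algebra. Your approach bypasses induction altogether: the single substitution $j=i-1$ and a direct appeal to the binomial expansion $(n+1)^{p+1}=\sum_{j=0}^{p+1}\binom{p+1}{j}n^j$ with the top two terms peeled off give the result in one stroke. What you gain is brevity and transparency---the identity is visibly just a truncated binomial expansion multiplied by $n$, rather than something that has to be verified recursively. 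The paper's inductive route works but obscures this; your version would be a strict improvement.
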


\begin{proof}
We will use the convention that $\ds \binom{k}{-1}=0$ for all $k \in \N$.

We proceed by induction on $p$. If $p=1$, then
$$\sum_{i=1}^{1}\binom{2}{i-1}n^i=\binom{2}{0}n^1=n=n\left((n+1)^{2}-n(1+n+1)\right),$$
so the statement holds.

Now assume that the statement holds for some $p\ge 1$. Then 
\begin{align*}
\sum_{i=1}^{p+1}\binom{p+2}{i-1}n^i &= \sum_{i=1}^{p+1}\left(\binom{p+1}{i-1}+\binom{p+1}{i-2}\right)n^i\\
&= \sum_{i=1}^{p+1}\binom{p+1}{i-1}n^i+\sum_{i=1}^{p+1}\binom{p+1}{i-2}n^i\\
&= \sum_{i=1}^{p}\binom{p+1}{i-1}n^i+\binom{p+1}{p}n^{p+1}+\underbrace{\sum_{i=2}^{p+1}\binom{p+1}{i-2}n^i}_{\text{by convention}}\\
&=n\left((n+1)^{p+1}-n^p(p+n+1)+n^p(p+1)\right)\\
&\qquad +\sum_{i=1}^{p}\binom{p+1}{i-1}n^{i+1}\\
&=n\left((n+1)^{p+1}-n^{p+1}+\sum_{i=1}^{p}\binom{p+1}{i-1}n^{i}\right)\\
&=n\left((n+1)^{p+1}-n^{p+1}+n((n+1)^{p+1}-n^p(p+n+1))\right)\\
&=n\left((n+1)^{p+1}-n^{p+1}+n(n+1)^{p+1}-n^{p+1}(p+n+1)\right)\\
&= n\left((n+1)^{p+2}-n^{p+1}((p+1)+n+1)\right).
\end{align*}
\end{proof}

\begin{theorem}
\label{power-sum-theorem} 
For $p,n\in\N$, $S_p(n)=\ds \sum_{k=1}^{n}k^p$. 
\end{theorem}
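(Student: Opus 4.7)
The plan is to strong-induct on $p$ and show that $T_p(n):=\sum_{k=1}^{n}k^p$ satisfies the very recursion that defines $S_p(n)$ in Definition~\ref{Spn-definition}. The base case $p=1$ is the classical identity $T_1(n)=n(n+1)/2=S_1(n)$, immediate by a one-line induction on $n$. For the inductive step I fix $p\ge 2$ and assume $T_i(n)=S_i(n)$ for every $n\in\N$ and every $i\in\{1,\dots,p-1\}$.

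The key ingredient is the standard telescoping argument. Starting from
\[
(n+1)^{p+1}-1 \;=\; \sum_{k=1}^{n}\bigl[(k+1)^{p+1}-k^{p+1}\bigr],
\]
I would expand each summand via the binomial theorem to obtain
\[
(n+1)^{p+1}-1 \;=\; \sum_{i=0}^{p}\binom{p+1}{i}T_i(n).
\]
The $i=0$ term contributes $\binom{p+1}{0}T_0(n)=n$, and the $i=p$ term contributes $(p+1)T_p(n)$. Peeling these off and using the trivial factorization $(n+1)^{p+1}-(n+1)=(n+1)\bigl((n+1)^p-1\bigr)$ gives
\[
(p+1)T_p(n) \;=\; (n+1)\bigl((n+1)^p-1\bigr) \;-\; \sum_{i=1}^{p-1}\binom{p+1}{i}T_i(n).
\]
Invoking the inductive hypothesis to replace each $T_i(n)$ on the right-hand side by $S_i(n)$, and dividing through by $p+1$, reproduces the defining formula for $S_p(n)$ verbatim, completing the induction.

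There is no serious obstacle here: the argument is really just the telescoping identity combined with the binomial theorem, and it precisely mirrors the classical derivation of the Faulhaber-style recursion. The only delicate bookkeeping is the separation of the $i=0$ and $i=p$ summands from the rest of the binomial sum. Lemmas~\ref{first-lemma} and~\ref{second-lemma} do not appear to be needed for this particular proof; I would expect them to play a role in the degree assertion of the Remark or in the symmetry arguments of Sections~\ref{Bernoulli-section}--\ref{Spn-symmetry-section}.
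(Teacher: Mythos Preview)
Your argument is correct. The telescoping identity
\[
(n+1)^{p+1}-1=\sum_{k=1}^{n}\bigl[(k+1)^{p+1}-k^{p+1}\bigr]=\sum_{i=0}^{p}\binom{p+1}{i}T_i(n)
\]
indeed yields, after stripping off the $i=0$ and $i=p$ terms and using $(n+1)^{p+1}-(n+1)=(n+1)\bigl((n+1)^p-1\bigr)$, exactly the recursion in Definition~\ref{Spn-definition} with $T_i$ in place of $S_i$; the strong inductive hypothesis then closes the loop.

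This is, however, a genuinely different route from the paper's. The paper's proof works in the opposite direction: it starts from the \emph{definition} of $S_{p+1}(n)$ and, through a lengthy chain of manipulations that relies on Lemmas~\ref{first-lemma} and~\ref{second-lemma} (Pascal's rule on the binomial coefficients, a shift $S_i(n)\to S_i(n-1)$, and a final telescoping in $k$), eventually reaches $(p+2)\sum_{k=1}^{n}k^{p+1}$. Your approach instead verifies that the power sums $T_p(n)$ themselves satisfy the defining recursion, which is essentially Pascal's classical derivation and is considerably shorter. Your method avoids both lemmas entirely; your guess that they are used elsewhere is off---in the paper they are invoked \emph{only} in the proof of this theorem---but that does not affect the validity of your proof.
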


\begin{proof}
We use strong induction on $p$. 

For $p=1$, $\ds S_1(n)=\frac{n(n+1)}{2} = \sum_{k=1}^n k$.

For $p=2$, 

\begin{align*}
S_2(n) &= \frac{1}{3}\left((n+1)\left((n+1)^2-1\right)-\binom{3}{1}S_1(n)\right)\\ 
&= \frac{1}{3}\left((n+1)(n^2+2n)-3\left(\frac{n(n+1)}{2}\right)\right)\\
&=\frac{1}{3}\cdot\frac{n(n+1)}{2}\left(2(n+2)-3\right)\\
&= \frac{n(n+1)(2n+1)}{6}\\
&= \sum_{k=1}^{n}k^2. 
\end{align*}

Now assume that for some $p\ge 2$, $S_i(n)=\ds \sum_{k=1}^{n}k^i$ holds for all $1 \le i \le p$. Then 
\begin{align*}
(p+2)S_{p+1}(n)&=(n+1)\left((n+1)^{p+1}-1\right)-\sum_{i=1}^{p}\binom{p+2}{i}S_i(n)\\ 
&= (n+1)\left((n+1)^{p+1}-1\right)-\sum_{i=1}^{p}\left(\binom{p+1}{i}+\binom{p+1}{i-1}\right)S_i(n)\\
&= \underbrace{n(n+1)^{p+1}+(n+1)\left((n+1)^p-1\right)}_{\text{by Lemma \ref{first-lemma}}} -\sum_{i=1}^{p-1}\binom{p+1}{i}S_i(n)\\
&\qquad -(p+1)S_p(n)-\sum_{i=1}^{p}\binom{p+1}{i-1}S_i(n)\\
&= n(n+1)^{p+1}+\underbrace{(p+1)S_p(n)}_{\text{by Definition \ref{Spn-definition}}}-(p+1)S_p(n) \\
& \qquad -\sum_{i=1}^{p}\binom{p+1}{i-1}S_i(n)\\
&=n\sum_{i=0}^{p+1}\binom{p+1}{i}n^i-\sum_{i=1}^{p}\binom{p+1}{i-1}S_i(n)\\
&=\sum_{i=1}^{p+2}\binom{p+1}{i-1}n^i-\sum_{i=1}^{p}\binom{p+1}{i-1}S_i(n)\\
&=n^{p+2}+(p+1)n^{p+1}-\sum_{i=1}^{p}\binom{p+1}{i-1}S_i(n-1)\\
&=n^{p+2}+(p+1)n^{p+1}-\sum_{k=1}^{n-1}\sum_{i=1}^{p}\binom{p+1}{i-1}k^i\\
&=n^{p+2}+(p+1)n^{p+1}-\sum_{k=1}^{n-1}\underbrace{k\left((k+1)^{p+1}-k^p(p+k+1)\right)}_{\text{by Lemma \ref{second-lemma}}} \\
&=n^{p+2}+(p+1)n^{p+1}-\sum_{k=1}^{n-1}k(k+1)^{p+1}+\sum_{k=1}^{n-1}k^{p+1}(k-1) \\
&\qquad +\sum_{k=1}^{n-1}k^{p+1}(p+2)\\
&=n^{p+2}+(p+1)n^{p+1}-(n-1)n^{p+1}+\sum_{k=1}^{n-1}(p+2)k^{p+1}\\
&=(p+2)\left(n^{p+1}+\sum_{k=1}^{n-1}k^{p+1}\right)\\
&=(p+2)\sum_{k=1}^{n}k^{p+1}. 
\end{align*}

Hence, $\ds S_p(n)=\sum_{k=1}^{n}k^p$ for all natural values of $p$ and $n$.
\end{proof}

\section{An identity involving Bernoulli numbers.} 
\label{Bernoulli-section}

Recall that Bernoulli numbers, $B_m$ for $m\in\Z$, $m\ge0$, are defined recursively as follows:

\begin{definition} 
\label{Bernoulli-numbers-definition}
Let $B_0=1$, and for each $m\ge1$,
$$\sum_{i=0}^{m}\binom{m+1}{i}B_i=0.$$
\end{definition}

\begin{remark}
\label{odd-B-remark}
For $m \ge 3$ odd, $B_m = 0$ \cite[p.\ 107]{Conway}.
\end{remark}

Gessel \cite{Gessel} proved that for any nonnegative integers $m$ and $n$, 
\begin{equation}
\label{Gessel-equation}
\sum_{i=0}^m \binom{m}{i}B_{n+i}=(-1)^{m+n}\sum_{j=0}^n\binom{n}{j}B_{m+j}.
\end{equation}

\begin{remark}
\label{Bernoulli-remark}
Setting $n=0$ in (\ref{Gessel-equation}) yields 
\begin{equation}
\label{Bernoulli-remark-equation}
\ds (-1)^mB_m=\sum_{i=0}^{m}\binom{m}{i}B_{i}=\sum_{i=0}^{m}\binom{m}{m-i}B_{m-i}.\end{equation} 
\end{remark}

\begin{theorem}
\label{Bernoulli-identity-theorem} 
For $m,k\in\Z$, $m \ge 1$, $0\le k \le m$, 
$$(-1)^{m-k}\binom{m}{k}B_{m-k}=\sum_{i=k}^{m}\binom{m}{i}\binom{i}{k}B_{m-i}.$$ 
\end{theorem}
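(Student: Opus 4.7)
My plan is to reduce the sum on the right-hand side to the identity already recorded in Remark \ref{Bernoulli-remark}, via the classical ``subset of a subset'' binomial identity
$$\binom{m}{i}\binom{i}{k}=\binom{m}{k}\binom{m-k}{i-k},$$
which holds for $0\le k\le i\le m$. Applying this factors $\binom{m}{k}$ out of the sum on the right-hand side and leaves a sum over $i$ of $\binom{m-k}{i-k}B_{m-i}$.

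Next, I would reindex the sum by $j=i-k$, so that $i$ running from $k$ to $m$ is replaced by $j$ running from $0$ to $m-k$, and $m-i=(m-k)-j$. After this substitution, the right-hand side becomes
$$\binom{m}{k}\sum_{j=0}^{m-k}\binom{m-k}{j}B_{(m-k)-j}.$$
Now the inner sum matches exactly the form $\sum_{i=0}^{M}\binom{M}{M-i}B_{M-i}$ appearing in Remark \ref{Bernoulli-remark} with $M=m-k$, and that remark tells us this sum equals $(-1)^{m-k}B_{m-k}$. Multiplying by the pulled-out $\binom{m}{k}$ yields the left-hand side $(-1)^{m-k}\binom{m}{k}B_{m-k}$, completing the proof.

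This proof is essentially a single algebraic rewrite followed by one application of a known identity, so I would not expect any genuine obstacle. The only point to handle carefully is the bookkeeping of the reindexing, making sure that the factor $B_{m-i}$ becomes $B_{(m-k)-j}$ (not $B_j$) so that we can invoke the \emph{second} form $\sum_i\binom{M}{M-i}B_{M-i}$ of the identity in Remark \ref{Bernoulli-remark}; equivalently, one can perform a further substitution $j\mapsto (m-k)-j$ and invoke the \emph{first} form $\sum_i\binom{M}{i}B_i=(-1)^M B_M$. Either route works, and no hypothesis on the parity of $m-k$ is required because both forms of the identity hold for all nonnegative exponents.
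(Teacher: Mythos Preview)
Your proof is correct and is essentially the same as the paper's: both use the ``subset of a subset'' identity $\binom{m}{i}\binom{i}{k}=\binom{m}{k}\binom{m-k}{i-k}$ together with Remark~\ref{Bernoulli-remark} applied with $m$ replaced by $m-k$. The only cosmetic difference is direction---you start from the right-hand side and simplify, while the paper starts from the known identity and multiplies through by $\binom{m}{k}$, expanding the product of binomials via factorials rather than quoting the identity.
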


\begin{proof}
Replacing $m$ by $m-k$ in (\ref{Bernoulli-remark-equation}) gives 
\begin{align*} 
\ds (-1)^{m-k}B_{m-k}&=\sum_{i=0}^{m-k}\binom{m-k}{m-k-i}B_{m-k-i} \\ 
& = \sum_{i=k}^{m}\binom{m-k}{m-i}B_{m-i}. \\
\end{align*}
Now multiplying both sides by $\displaystyle \binom{m}{k}$ yields 
\begin{align*} 
\ds (-1)^{m-k}\binom{m}{k}B_{m-k}
& = \sum_{i=k}^{m}\binom{m}{k}\binom{m-k}{m-i}B_{m-i}\\
& = \sum_{i=k}^{m}\frac{m!}{k!(m-k)!}\cdot \frac{(m-k)!}{(m-i)!(i-k)!}B_{m-i}\\
& = \sum_{i=k}^{m}\frac{m!}{i!(m-i)!}\cdot \frac{i!}{k!(i-k)!}B_{m-i}\\
& =\sum_{i=k}^{m}\binom{m}{i}\binom{i}{k}B_{m-i}. \\ 
\end{align*}

\end{proof}

\section{Symmetry of \texorpdfstring{$S_p(n)$}{Spn}.}
\label{Spn-symmetry-section}

Observe that all three of the polynomial formulas in (\ref{power-sum-formulas}) have $0$ and $-1$ as roots, and the second one also has $-\frac{1}{2}$ as a root. In fact, more is true: not only are the roots symmetric about $-\frac{1}{2}$, but the polynomials themselves have symmetry about $-\frac{1}{2}$.   
We will show that for each natural $p$, the sum of $p$-th powers of natural numbers from $1$ to $n$, $S_p(n)$, is symmetric about $-\frac{1}{2}$. 

\begin{theorem} 
\label{Spn-theorem}
For each $p\in\N$, $S_p(-(n+1))=(-1)^{p+1}S_p(n)$. Thus the graph of $S_p(n)$ is symmetric about the vertical line at  $-\frac{1}{2}$ if $p$ is odd, and symmetric about the point $\left(-\frac{1}{2},0\right)$ if $p$ is even.
\end{theorem}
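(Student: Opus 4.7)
The plan is to use Faulhaber's formula
$$(p+1)\,S_p(n) = \sum_{j=0}^{p} \binom{p+1}{j} B_j (n+1)^{p+1-j}$$
(either cited as the classical result referenced in the introduction, or first proved as a lemma by strong induction on $p$ from the recursive Definition \ref{Spn-definition} of $S_p$ together with the defining recurrence for the $B_j$ in Definition \ref{Bernoulli-numbers-definition}) to recast the symmetry claim as a polynomial identity in $n$ that can be read off from Theorem \ref{Bernoulli-identity-theorem}.

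Substituting $n \mapsto -(n+1)$ into Faulhaber gives
$$(p+1)\,S_p(-(n+1)) = \sum_{j=0}^{p} (-1)^{p+1-j}\binom{p+1}{j} B_j\, n^{p+1-j},$$
while expanding $(n+1)^{p+1-j}$ binomially in $(-1)^{p+1}(p+1)\,S_p(n)$ and regrouping by powers of $n$ yields, for each $k \geq 1$, a coefficient of $n^k$ equal to $(-1)^{p+1}\sum_{j=0}^{p+1-k}\binom{p+1}{j}\binom{p+1-j}{k} B_j$. Matching the two coefficients of $n^k$ reduces, after clearing signs, to
$$\sum_{j=0}^{p+1-k}\binom{p+1}{j}\binom{p+1-j}{k}B_j = (-1)^{p+1-k}\binom{p+1}{k}B_{p+1-k},$$
which is exactly Theorem \ref{Bernoulli-identity-theorem} with $m = p+1$ after the reindexing $i = p+1-j$ (noting $\binom{p+1}{p+1-j} = \binom{p+1}{j}$). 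The constant term of $(p+1)\,S_p(-(n+1))$ is visibly zero, and the constant term of $(-1)^{p+1}(p+1)\,S_p(n)$ equals $(-1)^{p+1}\sum_{j=0}^p \binom{p+1}{j} B_j$, which vanishes by Definition \ref{Bernoulli-numbers-definition}.

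I expect the main obstacle to be the passage to the Bernoulli form of $S_p(n)$: if one does not simply invoke Faulhaber, then proving it from Definition \ref{Spn-definition} requires an induction that weaves together the recurrence for $S_p$ with the recurrence for the $B_j$, together with some careful binomial bookkeeping. Once that representation is available, the remainder is a transparent coefficient comparison tailor-made for Theorem \ref{Bernoulli-identity-theorem}. The second assertion of the theorem then follows immediately from the functional equation: the map $n \mapsto -1-n$ is an involution whose unique fixed point is $n = -\tfrac{1}{2}$, so the relation $S_p(-(n+1)) = (-1)^{p+1}S_p(n)$ yields axial symmetry about the vertical line $n = -\tfrac{1}{2}$ when $p$ is odd and point symmetry about $\bigl(-\tfrac{1}{2},0\bigr)$ when $p$ is even.
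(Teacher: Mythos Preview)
Your proposal is correct and follows essentially the same approach as the paper: invoke Faulhaber's formula, substitute $n \mapsto -(n+1)$, expand binomially, dispose of the constant term via the Bernoulli recurrence of Definition~\ref{Bernoulli-numbers-definition}, and match the remaining coefficients using Theorem~\ref{Bernoulli-identity-theorem} with $m = p+1$. The only cosmetic difference is that the paper uses the $n$-form of Faulhaber, $S_p(n) = \frac{1}{p+1}\sum_{i=0}^{p}(-1)^i\binom{p+1}{i}B_i\,n^{p+1-i}$ (cited from \cite{Conway}), so that the binomial expansion lands on the $S_p(-(n+1))$ side, whereas you use the equivalent $(n+1)$-form, which shifts the expansion to the $(-1)^{p+1}S_p(n)$ side; the two computations are mirror images and yield the same reduction to Theorem~\ref{Bernoulli-identity-theorem}.
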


\begin{proof}
We use the following Faulhaber formula \cite[p.\ 107]{Conway}:

$$S_p(n)=\frac{1}{p+1}\sum_{i=0}^{p}(-1)^i\binom{p+1}{i}B_in^{p+1-i}.$$
Note that with an index change, this formula is equivalent to

$$S_p(n)=\frac{1}{p+1}\sum_{i=1}^{p+1}(-1)^{p+1-i}\binom{p+1}{i}B_{p+1-i}n^i.$$
Now, 
\begin{align*}
S_p(-(n+1)) 
& = \frac{1}{p+1}\sum_{i=1}^{p+1}(-1)^{p+1-i}\binom{p+1}{i}B_{p+1-i}\left(-(n+1)\right)^i\\
& =\frac{1}{p+1}\sum_{i=1}^{p+1}(-1)^{p+1-i}\binom{p+1}{i}B_{p+1-i}(-1)^i\sum_{k=0}^{i}\binom{i}{k}n^k\\
& =\frac{1}{p+1}\sum_{i=1}^{p+1}\sum_{k=0}^{i}(-1)^{p+1-i}(-1)^i\binom{p+1}{i}\binom{i}{k}B_{p+1-i}n^k\\
& =\frac{1}{p+1}\left(\sum_{i=1}^{p+1}(-1)^{p+1}\binom{p+1}{i}\binom{i}{0}B_{p+1-i} \right.\\
& \qquad \left. +\sum_{k=1}^{p+1}\sum_{i=k}^{p+1}(-1)^{p+1}\binom{p+1}{i}\binom{i}{k}B_{p+1-i}n^k\right)\\
& =\frac{1}{p+1}\left((-1)^{p+1}\underbrace{\sum_{i=0}^{p}\binom{p+1}{i}B_i}_{=0 \text{ by Definition \ref{Bernoulli-numbers-definition}}} \right. \\
& \qquad +\sum_{k=1}^{p+1}(-1)^{p+1}\sum_{i=k}^{p+1}\binom{p+1}{i}\binom{i}{k}B_{p+1-i}n^k
\left) \phantom{\underbrace{\sum_{j=0}^{p}\binom{p+1}{j}B_j}_{\text{=0 by Definition 3.1}}} \right. \\
& =\frac{1}{p+1}\left(\sum_{k=1}^{p+1}(-1)^{p+1}\sum_{i=k}^{p+1}\binom{p+1}{i}\binom{i}{k}B_{p+1-i}n^k\right) \\ 
& = \frac{1}{p+1}\left(\sum_{k=1}^{p+1}(-1)^{p+1}\underbrace{(-1)^{p+1-k}\binom{p+1}{k}B_{p+1-k}}_{\text{by Theorem \ref{Bernoulli-identity-theorem} for }m=p+1}n^k \right)\\
& = (-1)^{p+1} S_p(n). 
\end{align*}

Thus if $p$ is odd, then $S_p(n)=S_p\left(-(n+1)\right)$, so the graph of $S_p(n)$ is symmetric about the vertical line at  $-\frac{1}{2}$, and if $p$ is even, then $S_p(n)=-S_p\left(-(n+1)\right)$, so the graph of $S_p(n)$ is symmetric about the point $\left(-\frac{1}{2},0\right)$.
\end{proof}

\begin{corollary}
\label{symmetry-corollary}
For each $p\in\N$, the roots of $S_p(n)$ are symmetric about $-\frac{1}{2}$. When $p$ is even, $S_p(n)$ has $-\frac{1}{2}$ as a root.
\end{corollary}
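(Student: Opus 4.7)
The plan is to derive both assertions as immediate consequences of Theorem \ref{Spn-theorem}, which supplies the functional equation $S_p(-(n+1)) = (-1)^{p+1} S_p(n)$. The involution $\sigma \colon r \mapsto -1-r$ on $\R$ is precisely reflection about $-\frac{1}{2}$, since the midpoint of $r$ and $-1-r$ is $-\frac{1}{2}$; so to prove that the roots of $S_p$ are symmetric about $-\frac{1}{2}$, it suffices to show that the zero set of $S_p$ is stable under $\sigma$.

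First I would observe that if $r$ is a root of $S_p$, then evaluating the functional equation at $n=r$ gives $S_p(-(r+1)) = (-1)^{p+1} S_p(r) = 0$, so $\sigma(r) = -1-r$ is also a root. Because $\sigma$ is an involution whose only fixed point is $-\frac{1}{2}$, this pairs the roots into orbits of size one or two that are symmetric about $-\frac{1}{2}$. Since the functional equation $S_p(-(n+1)) = (-1)^{p+1}S_p(n)$ is an equality of polynomials in $n$, the pairing automatically respects multiplicities: if $(n-r)^k$ divides $S_p(n)$, then $(n-\sigma(r))^k$ does as well.

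For the second assertion, I would simply specialize the functional equation at the fixed point $n=-\frac{1}{2}$ of $\sigma$. When $p$ is even, $(-1)^{p+1}=-1$, so the identity collapses to $S_p(-\frac{1}{2}) = -S_p(-\frac{1}{2})$, which forces $2S_p(-\frac{1}{2})=0$ and hence $S_p(-\frac{1}{2})=0$.

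There is essentially no obstacle here: the corollary is a formal consequence of Theorem \ref{Spn-theorem}, with both halves reducing to plugging an appropriate value into the functional equation. The only small conceptual check is that the affine map $r\mapsto -1-r$ really is reflection about $-\frac{1}{2}$, which follows from the midpoint observation above.
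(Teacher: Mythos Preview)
Your proposal is correct and matches the paper's approach: the corollary is stated there without proof, as an immediate consequence of Theorem~\ref{Spn-theorem}, and your argument spells out exactly that deduction. The only cosmetic point is that you define the involution $\sigma$ on $\R$, whereas the roots under discussion may be complex; but your polynomial-divisibility remark already covers this, since the functional equation is a polynomial identity and hence holds over $\C$ as well.
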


\section{Open problems.}
\label{open-problems-section}

As shown in section \ref{recursive-definition-section}, $S_p(n)$ is a polynomial in $n$ of degree $p+1$. Therefore it has $p+1$ complex roots, counting with multiplicity. Below are two open questions about these roots. 
\begin{enumerate}
\item How many distinct real roots does $S_p(n)$ have, and what are their multiplicities? 
\item Are there any patterns in the roots, both real and complex, in addition to the symmetry described in Corollary \ref{symmetry-corollary}? 
\end{enumerate}

\begin{section}{Acknowledgment.}
The authors would like to thank the College of Science and Mathematics at California State University, Fresno for supporting this work. The authors would also like to thank the anonymous reviewer for their valuable suggestions.
\end{section}

\bigskip

\hrule

\bigskip

\noindent 2010 \emph{Mathematics Subject Classification} Primary 11B83; Secondary 11B68, 11B37.\\
\emph{Keywords:} number theory, power sums, Bernoulli numbers.

\bigskip 

\hrule 

\bigskip
\noindent (Concerned with sequences \href{http://www.oeis.org/A027641}{A027641} and \href{http://www.oeis.org/A027642}{A027642}.) 

\end{document}